\newcommand{\bea}{\begin{eqnarray}}
\newcommand{\eea}{\end{eqnarray}}
\newcommand{\ba}{\begin{array}}
\newcommand{\ea}{\end{array}}
\newcommand{\edc}{\end{document}}
\newcommand{\bc}{\begin{center}}
\newcommand{\ec}{\end{center}}
\newcommand{\be}{\begin{equation}}
\newcommand{\ee}{\end{equation}}
\def\bc{{\mathbb C}}
\newtheorem{thm}{Theorem}[section]
\newtheorem{prop}[thm]{Proposition}
\theoremstyle{remark}
\newtheorem{rem}{Remark}[section]
\date{\today}
\begin{document}
\title[Gibbs Measures with memory of length 2]{Phase transition and Gibbs Measures of Vannimenus model on semi-infinite Cayley tree of order three}%\thanks{The paper has been submitted to  "Phase Transitions A Multinational Journal".}

\author{Hasan Ak\i n}
\address{Department of Mathematics, Faculty of Education,
Zirve University, 27260 Gaziantep, Turkey}
%\author{}%
%\address{}%
%\email{hasan.akin@zirve.edu.tr}%
%\thanks{\emph{Submitted to Romanian Journal of Information Science and Technology}}%
%\subjclass{37A15, 37B40.}%
%\keywords{Reversible Cellular Automata, Cayley tree, null boundary condition, Bernoulli measure, entropy.}%

\date{May 19, 2016}%
%\dedicatory{}%
%\commby{}%
% ----------------------------------------------------------------
\begin{abstract}
Ising model with competing nearest-neighbors and prolonged
next-nearest-neighbors interactions on a Cayley tree has long been
studied but there are still many problems untouched. This paper
tackles new Gibbs measures of Ising-Vannimenus model with
competing nearest-neighbors and prolonged next-nearest-neighbors
interactions on a Cayley tree (or Bethe lattice) of order three.
By using a new approach, we describe the translation-invariant
Gibbs measures for the model. We show that some of the measures
are extreme Gibbs distributions. In this paper we take up with
trying to determine when phase transition does occur.
\\
\textbf{Keywords}: Cayley tree, Gibbs measures,  Ising-Vannimenus model, phase transition.\\
\textbf{PACS}: 05.70.Fh;  05.70.Ce; 75.10.Hk.
\end{abstract}
\maketitle

\section{Introduction}

%In the case of a finite graph there is exactly one Gibbs state with potential F for a given potential F; but when the graph is
%not finite we may not have this uniqueness, i.e. for some potentials F there may be more than one Gibbs state with potential F.
%When this happens we say that phase transition occurs for F. In this paper we take up with trying to determine
%when phase transition does occur.

%also loosely justified by the very discrete nature of physics, in generally, the physical
%space is be modelled by a lattice $S$, which in examples are mostly the $d$-dimensional regular lattice $\mathbb{Z}^d$.

The definition of a Gibbs state on a finite subset of
$\mathbb{Z}^d$ goes back to the classical work of Gibbs
\cite{Gibbs}. Markov random fields on the euclidean lattices
$\mathbb{Z}^d$ were first introduced by Dobrushin \cite{D1}.
Preston has shown that Markov random fields and Gibbs states with
nearest neighbour potentials are the same \cite{Preston}. Gibbs
states (or measures) only consider finite subsets of
$\mathbb{Z}^d$ which are then used to compute various
thermodynamic quantities and examine their corresponding limiting
behaviour \cite{D1,G,Bax,FV}. Fannes and Verbeure \cite{FV} took
into account correlations between $n$ successive lattice points as
they studied one-dimensional classical lattice systems with an
increasing sequence of subsets of the state space. These states
correspond in probability theory to so-called Markov chains with
memory of length $n$.

Two important advantages of using tree models to determine Gibbs
measures are that they eliminate the need for approximations and
calculations can be carried out to high degrees of accuracy. In
addition, models such as Ising and Potts on the Cayley tree (or
Bethe lattice) can be helpful in discovering additional systems
with related properties. As a result, many researchers have
employed the Ising and Potts models in conjunction with the Cayley
tree \cite{BRZ,PB,bak,BB,BG,GTA,GATTMJ,UGAT}. The Ising model has
relevance to physical, chemical, and biological systems
\cite{FS,G,IT,Iof}. The Ising model investigated by Vannimenus
\cite{Vannimenus} consists of Ising spins $(\sigma=\pm 1)$ on a
rooted Cayley tree with a branching ratio of 2 \cite{ART}, in
which two coupling constants are present: nearest-neighbour (NN)
interactions of strength and next-nearest-neighbour (NNN)
interactions. Specifically, in \cite{Akin2016}, the author has
used a new method to investigate a rigorous description of Gibbs
measures with a memory of length 2 that corresponds to the
Ising-Vannimenus Model on the Cayley tree of order 2. This present
paper introduces the Ising model corresponding to the Hamiltonian
given by Vannimenus \cite{Vannimenus} on Cayley tree of order
three. Furthermore, the author \cite{Akin2016} has proposed a
rigorous measure-theoretical approach to investigate Gibbs
measures with a memory of length for the Ising-Vannimenus Model on
the Cayley tree of order two. This study further bases its
investigation of Gibbs measures on the Markov random field on
trees and on recurrent equations following from this theory
\cite{BRZ,BG,GATTMJ,ART,AT1,NHSS,GRS,Zachary,NHSS1}. Rozikov et
al. \cite{RAU} analyzed the recurrent equations of a generalized
Axial Next-Nearest-Neighbour Ising (ANNNI) model on a Cayley tree
and documented critical temperatures and curves, number of phases,
and partition function. To describe all Gibbs measures
corresponding to a given Hamiltonian is one of the main problems
of statistical physics \cite{NHSS1}.
%%It is well known that such
%%measures form a nonempty convex compact subset in the set of all
%%probability measures.

In this paper, we are going to focus on the translation invariant
Gibbs measures with memory of length 2 associated to the
Ising-Vannimenus model on a Cayley tree of order 3. One of many
approaches to studying the equation solutions that describe Gibbs
measures for lattice models on Cayley tree is the Markov random
field \cite{BG,ART,AT1,NHSS}. This paper uses the Markov random
field to achieve the following objectives: construct the
recurrence equations corresponding to a generalized ANNNI model;
formulate the problem in terms of nonlinear recursion relations
along the branches of a Cayley tree of order three; fulfill the
Kolmogorov \emph{consistency} condition; describe the
translation-invariant Gibbs measures for the model; and show that
some measures are extreme Gibbs distributions.

In \cite{NHSS} the authors have studied the problem of phase
transition for models considered by Vannimenus \cite{Vannimenus}.
Mukhamedov et al. \cite{MDA} have proved the existence of the
phase transition for the Vannimenus model \cite{Vannimenus} in the
$p$-adic setting. Ganikhodjaev \cite{Nasir} has considered the
Ising model on the semi-infinite Cayley tree of second order with
competing interactions up to the third-nearest-neighbors with
spins belonging to the different branches of the tree and for this
model investigated the problem of phase transition. Significant
research has determined that a finite graph corresponds to exactly
one Gibbs state with potential $F$ for a given potential $F$ and
that graphs that are not finite lack this quality, \emph{i.e.},
for some potentials $F$, there may be more than one corresponding
Gibbs state with potential $F$ \cite{G,Preston,Sinai}. When there
is more than one corresponding Gibbs measure, we say that phase
transition occurs for the potential $F$. In this paper, we also
attempt to determine when phase transition occurs for the model.

This article is organized as follows: In Section
\ref{Preliminaries}, we provide definitions and preliminaries. In
Section \ref{Gibbs measures}, we introduce general structure of
Gibbs measures with memory of length 2 on a Cayley tree of order
3, with functional equations, and fulfill the Kolmogorov
\emph{consistency} condition. In Section \ref{TIGM}, we establish
translation-invariant Gibbs measures corresponding to the
associated model \eqref{hm}, demonstrating that some occurrences
are extreme. Finally, Section \ref{Conclusions} contains
concluding remarks and discussion of the consequences of the
results.

\section{Preliminaries and Definitions}\label{Preliminaries}
For this paper, let  $\Gamma^k=(V, L, i)$ be the uniform Cayley
tree of order $k$ with a root vertex $x^{(0)}\in V$, where each
vertex has $k + 1$ neighbors with $V$ as the set of vertices and
the set of edges. The notation $i$ represents the incidence
function corresponding to each edge $\ell\in L$, with end points
$x_1,x_2\in V$. There is a distance $d(x,y)$ on $V$ the length of
the minimal point from $x$ to $y$, with the assumed length of 1
for any edge.

We denote the sphere of radius $n$ on $V$ by
$$
W_n=\{x\in V: d(x,x^{(0)})=n \}
$$
and the ball of radius $n$ by
$$
V_n=\{x\in V: d(x,x^{(0)})\leq n \}.
$$
The set of direct successors of $x$ for any $x\in W_n$ is denoted
by
$$S(x)=\{y\in W_{n+1}: d(x,y)=1 \}.
$$
The Ising  model with competing nearest-neighbors interactions is
defined by the Hamiltonian
\begin{equation*}\label{hm}
H(\sigma)=-J\sum_{<x,y>\subset V}\sigma(x)\sigma(y),
\end{equation*}
where the sum runs over nearest-neighbor vertices $<x,y>$ and the
spins $\sigma(x)$ and $\sigma(y)$ take values in the set
$\Phi=\{-1,+1\}$.

A finite-dimensional distribution of measure $\mu$  in the volume
$V_n$ has been defined by formula
\begin{equation*}\label{mu}
\mu_n(\sigma_n)=\frac{1}{Z_{n}}\exp[-\frac{1}{T}H_n(\sigma)+\sum_{x\in
W_{n}}\sigma(x)h_{x}]
\end{equation*}
with the associated partition function defined as
\begin{equation*}\label{mu}
Z_n=\sum_{\sigma_n \in
\Phi^{V_n}}\exp[-\frac{1}{T}H_n(\sigma)+\sum_{x\in
W_{n}}\sigma(x)h_{x}],
\end{equation*}
where the spin configurations $\sigma_n$ belongs to $\Phi^{V_n}$
and $h=\{h_x\in \mathbb{R},x\in V\}$ is a collection of real
numbers that define boundary condition (see \cite{BG,GRRR,GHRR}).
Previously, researchers frequently used memory of length 1 over a
Cayley tree to study Gibbs measures \cite{BG,GRRR,GHRR}.

The Hamiltonian
\begin{equation}\label{hm}
H(\sigma)=-J_p\sum_{>x,y<}\sigma(x)\sigma(y)-J\sum_{<x,y>}\sigma(x)\sigma(y)
\end{equation}
defines the Ising-Vannimenus model with competing
nearest-neighbors and next-nearest-neighbors, where the sum in the
first term ranges all prolonged next-nearest-neighbors and the sum
in the second term ranges all nearest-neighbors and the spins
$\sigma(x)$ and $\sigma(y)$ take values in the set $\Phi$.  Here
$J_p,J\in \mathbb{R}$ are coupling constants corresponding to
prolonged next-nearest-neighbor and nearest-neighbor potentials,
respectively.

In \cite{Akin2016,NHSS}, the next generalizations are considered.
These authors have defined Gibbs measures or Gibbs states with
memory of length 2 (on spin-configurations $\sigma$) for
generalized ANNNI models on Cayley trees of order 2 with the
following formula:
\begin{equation}\label{mu}
\mu_{\textbf{h}}^{(n)}(\sigma)=\frac{1}{Z_{n}}\exp[-\beta
H_n(\sigma)+\sum_{x\in W_{n-1}}\sum_{y\in
S(x)}\sigma(x)\sigma(y)h_{xy,\sigma(x)\sigma(y)}].
\end{equation}
Here, as before, $\beta=\frac{1}{kT}$ and $\sigma_n: x\in V_n\to
\sigma_n(x)$ and $Z_n$ corresponds to the following partition
function:
$$
Z_n=\sum\limits_{\sigma_n\in \Omega_{V_n}}\exp[-\beta
H(\sigma_n)+\sum_{x\in W_{n-1}}\sum_{y\in
S(x)}\sigma(x)\sigma(y)h_{xy,\sigma(x)\sigma(y)}].
$$
Let us consider increasing subsets of the set of states for one
dimensional lattices \cite{FV} as follows:
$$\mathfrak{G}_1\subset \mathfrak{G}_2\subset... \subset \mathfrak{G}_n\subset...,
$$
where $\mathfrak{G}_n$ is the set of states corresponding to
non-trivial correlations between $n$-successive lattice points;
$\mathfrak{G}_1$ is the set of mean field states; and
$\mathfrak{G}_2$ is the set of Bethe-Peierls states, the latter
extending to the so-called Bethe lattices. All these states
correspond in probability theory to so-called Markov chains with
memory of length $n$ (see \cite{FV}).

In \cite{Akin2016,NHSS,NHSS1}, the authors have studied Gibbs
measures with memory of length 2 for generalized ANNNI models on a
Cayley tree of order 2 by means of a vector valued function
$$
\textbf{h}:<x,y> \rightarrow
\textbf{h}_{xy}=(h_{xy,++},h_{xy,+-},h_{xy,-+},h_{xy,--})\in
\mathbb{R}^4,
$$
where $h_{xy,\sigma(x)\sigma(y)}\in \mathbb{R}$ and $x\in W_{n-1},
y\in S(x).$

%In this paper, we will discuss a new method of defining Markov
%chains with memory of length 2 on a Cayley tree of order three
%(see for details \cite{FV,Akin2016}).

Let $x\in W_{n}$ for some $n$ and $S(x)=\{y,z,w\}$, where
$y,z,w\in W_{n+1}$ are the direct successors of $x$. Denote
$B_1(x)=\{x,y,z,w\}$ a unite semi-ball with a center $x$, where
$S(x)=\{y,z,w\}$.

We denote the set of all spin configurations on $V_n$ by
$\Phi^{V_n}$ and the set of all configurations on unite semi-ball
$B_1(x)$ by $\Phi^{B_1(x)}$. One can get that the set
$\Phi^{B_1(x)}$ consists of  sixteen configurations
\begin{equation}\label{config1}
\Phi^{B_1(x)}=\left\{\left(
  \begin{array}{ccc}
    l & k & j \\
      & i &
  \end{array}
\right): i,j,k,l\in \Phi \right\}.
\end{equation}
Let us denote the spin configurations belonging to $\Phi^{B_1(x)}$
by
\begin{equation*}
\sigma_1^{(1)}=\left(
  \begin{array}{ccc}
    + & + & + \\
      & + &
  \end{array}
\right), \sigma_2^{(1)}=\left(
  \begin{array}{ccc}
    + & + & - \\
      & + &
  \end{array}
\right),\sigma_3^{(1)}=\left(
  \begin{array}{ccc}
    + & - & + \\
      & + &
  \end{array}
\right), \sigma_4^{(1)}=\left(
  \begin{array}{ccc}
    - & + &+ \\
      & + &
  \end{array}
\right),
\end{equation*}
\begin{equation*}
\sigma_5^{(1)}=\left(
  \begin{array}{ccc}
    + & - & - \\
      & + &
  \end{array}
\right), \sigma_6^{(1)}=\left(
  \begin{array}{ccc}
    - & + & - \\
      & + &
  \end{array}
\right),\sigma_7^{(1)}=\left(
  \begin{array}{ccc}
    - & - & + \\
      & + &
  \end{array}
\right), \sigma_8^{(1)}=\left(
  \begin{array}{ccc}
    - & - &- \\
      & + &
  \end{array}
\right),
\end{equation*}

\begin{equation*}
\sigma_9^{(1)}=\left(
  \begin{array}{ccc}
    + & + & + \\
      & - &
  \end{array}
\right), \sigma_{10}^{(1)}=\left(
  \begin{array}{ccc}
    + & + & - \\
      & - &
  \end{array}
\right),\sigma_{11}^{(1)}=\left(
  \begin{array}{ccc}
    + & - & + \\
      & - &
  \end{array}
\right), \sigma_{12}^{(1)}=\left(
  \begin{array}{ccc}
    - & + &+ \\
      & - &
  \end{array}
\right),
\end{equation*}
\begin{equation*}
\sigma_{13}^{(1)}=\left(
  \begin{array}{ccc}
    + & - & - \\
      & - &
  \end{array}
\right), \sigma_{14}^{(1)}=\left(
  \begin{array}{ccc}
    - & + & - \\
      & - &
  \end{array}
\right),\sigma_{15}^{(1)}=\left(
  \begin{array}{ccc}
    - & - & + \\
      & - &
  \end{array}
\right), \sigma_{16}^{(1)}=\left(
  \begin{array}{ccc}
    - & - &- \\
      & - &
  \end{array}
\right).
\end{equation*}
%Let us denote $A^{(1)}_i=order\sigma_{i}^{(1)}=$
For brevity, we adopt a natural definition for the quantities
$h\left(\begin{array} {ccc} z, y,w \\ x
 \end{array} \right)$ as $h_{B_1(x)}$.\\
By contrast, this paper assumes that vector valued function
$\textbf{h}:V\rightarrow \mathbb{R}^{16}$ is defined by
\begin{equation}\label{consistency}
\textbf{h}:<x,y,z,w>\rightarrow
\textbf{h}_{B_1(x)}=(h_{B_1(x),\sigma(x)\sigma(y)\sigma(z)\sigma(w)}:\sigma(x),\sigma(y),\sigma(z),\sigma(w)\in
\Phi),
\end{equation}
where $h_{B_1(x),\sigma(x)\sigma(y)\sigma(z)\sigma(w)}\in
\mathbb{R}$, $x\in W_{n-1}$ and $y,z,w\in S(x).$ Finally, we use
the function $h_{xyzw,\sigma(x)\sigma(y)\sigma(z)\sigma(w)}$ to
describe the Gibbs measure of any configuration $\left(
\begin{array} {ccc}\sigma(z)\ \ \sigma(y)\ \ \sigma(w)\\ \sigma(x)
\end{array}\right)$
that belongs to $\Phi^{B_1(x)}$.

%In the Ref. \cite{Akin2016}, we have proposed a rigorous measure-theoretical approach to investigate a Ising-Vanniminus model on a Cayley tree of order
%two. Also we have obtained certain conditions for the existence of Gibbs measures corresponding to the model \eqref{hm}.
%In the Section \ref{section3}, we will generalize the results obtained in \cite{NHSS,Akin2016} to new Gibbs measures with memory of length 2 on the %Cayley tree.
\section{Construction of Gibbs measures and Functional Equations}\label{Gibbs measures}
On non-amenable graphs, Gibbs measures depend on boundary
conditions \cite{Rozikov}. In this paper, we consider this
dependency for Cayley trees, the simplest of graphs.
In this section, we present the general structure of Gibbs measures with memory of length 2 on the Cayley tree of order three.\\
An arbitrary edge $<x^{(0)},x^{(1)}>=\ell \in L$ deleted from a
Cayley tree $\Gamma^3_1$ and $\Gamma^3_0$ splits into two
components: semi-infinite Cayley tree $\Gamma^3_1$ and
semi-infinite Cayley tree $\Gamma^3_0$. This paper considers a
semi-infinite Cayley tree $\Gamma^3_0$ (see Fig. \ref{fig1}).
\begin{figure} [!htbp]\label{fig1}
\centering
\includegraphics[width=55mm]{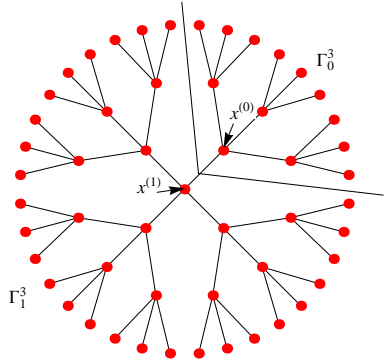}\ \ \ \ \ \ \ \ \ \ \ \
\caption{Cayley tree of order three, $k=3$.}\label{fig1}
\end{figure}
For a finite subset $V_n$ of the lattice, we define the
finite-dimensional Gibbs probability distributions on the
configuration space
$$\Omega^{V_n}=\{\sigma_n=\{\sigma(x)=\pm 1, x\in V_n \}\}
$$
at inverse temperature $\beta=\frac{1}{kT}$ by formula
\begin{eqnarray}\label{mu}
&&\mu_{\textbf{h}}^{(n)}(\sigma)\\\nonumber
&=&\frac{1}{Z_{n}}\exp[-\beta H_n(\sigma)+\sum_{x\in
W_{n-1}}\sum_{y,z,w\in
S(x)}\sigma(x)\sigma(y)\sigma(z)\sigma(w)h_{B_1(x),\sigma(x)\sigma(y)\sigma(z)\sigma(w)}].
\end{eqnarray}
with the corresponding partition function defined by
$$
Z_n=\sum\limits_{\sigma_n\in \Omega^{V_n}}\exp[-\beta
H(\sigma_n)+\sum_{x\in W_{n-1}}\sum_{y,z,w\in
S(x)}\sigma(x)\sigma(y)\sigma(z)\sigma(w)h_{B_1(x),\sigma(x)\sigma(y)\sigma(z)\sigma(w)}].
$$
%In additional, we assume that $\textbf{h}:  V\times V\times V \longrightarrow \mathbb{R}$
%is a mapping of
%$$
%\textbf{h}_{xyz}=(h_{xyz,+++},h_{xyz,++-},h_{xyz,+-+},h_{xyz,+--},h_{xyz,-++},h_{xyz,-+-},h_{xyz,--+}, h_{xyz,---}),
%$$
%where $h_{xyz,\sigma(x)\sigma(y)\sigma(z)}\in \mathbb{R}$ and $x\in
%W_{n-1}, y,z\in S(x).$
We will obtain a new set of Gibbs measures that differ from
previous studies \cite{Akin2016,NHSS}. These new measures consider
translation-invariant boundary conditions. We will consider a
construction of an infinite volume distribution with given
finite-dimensional distributions. More exactly, we will attempt to
find a probability measure $\mu$ on $\Omega$  that is compatible
with given measures $\mu_{\textbf{h}}^{(n)}$, \emph{i.e.},
\begin{equation}\label{CM}
\mu(\sigma\in\Omega:
\sigma|_{V_n}=\sigma_n)=\mu^{(n)}_{\textbf{h}}(\sigma_n), \ \ \
\textrm{for all} \ \ \sigma_n\in\Omega^{V_n}, \ n\in \mathbf{N}.
\end{equation}

The consistency condition for $\mu_{\textbf{h}}^{n}(\sigma_n)$,
$n\geq 1$ is
\begin{equation}\label{comp}
\sum_{\omega\in\Omega^{W_n}}\mu^{(n)}_{\textbf{h}}(\sigma_{n-1}\vee\omega)=\mu^{(n-1)}_{\textbf{h}}(\sigma_{n-1}),
\end{equation}
for any configuration $\sigma_{n-1}\in\Omega^{V_{n-1}}$. This
condition implies the existence of a unique measure
$\mu_{\textbf{h}}$ defined on $\Omega$ with a required condition
\eqref{CM}. Such a measure $\mu_{\textbf{h}}$ is a Gibbs measure
with memory of length 2 corresponding to the model.

We define interaction energy on $V$ with the inner configuration
$\sigma_{n-1}\in\Omega^{V_{n-1}}$ and the boundary condition $\eta
\in \Omega^{ W_{n}}$ as
\begin{eqnarray}\label{ham1}
H_n(\sigma_{n-1}\vee\eta)
%&=&-J_p\sum\limits_{ \widetilde{>x,y<}}\sigma(x)\sigma(y) -J
%\sum\limits_{<x,y>}\sigma(x)\sigma(y)\\\nonumber
&=&-J\sum\limits_{<x,y>\in V_{n-1}}\sigma(x)\sigma(y) -J
\sum\limits_{x\in W_{n-1}}\sum\limits_{y\in
S(x)}\sigma(x)\eta(y)\\\nonumber &&-J_p\sum\limits_{>x,y<\in
V_{n-1}}\sigma(x)\sigma(y) -J_p \sum\limits_{x\in
W_{n-2}}\sum\limits_{z\in S^2(x)}\sigma(x)\eta(z)\\\nonumber
&=&H_n(\sigma_{n-1})-J\sum\limits_{x\in W_{n-1}}\sum\limits_{y\in
S(x)}\sigma(x)\eta(y)-J_p \sum\limits_{x\in
W_{n-2}}\sum\limits_{z\in S^2(x)}\sigma(x)\eta(z),
\end{eqnarray}
where $\sigma_{n-1}\vee\eta$ is the concatenation of the
configurations $\sigma_{n-1}$ and $\eta$. Thus, we have
\begin{eqnarray*}
&&\exp [-\beta H_n(\sigma_{n-1})+\sum\limits_{x\in W_{n-2}}
\sum\limits_{y,z,w\in S(x)}\sigma(x)\sigma(y)\sigma(z)\sigma(w)h_{B_1(x),\sigma(x)\sigma(y)\sigma(z)\sigma(w)}]\\
&=&L_n\sum\limits_{\eta\in \Omega^{W_{n}}}\exp[-\beta
H_n(\sigma_{n-1}\vee \eta)+\sum\limits_{y,z,w\in
W_{n-1}}\sum\limits_{y_i\in S(y)}\sum\limits_{z_i\in
S(z)}\sum\limits_{w_i\in S(w)}(B(h,J,J_p)],
\end{eqnarray*}
where $L_n=\frac{Z_{n-1}}{Z_n}$ and
\begin{eqnarray*}
B(h,J,J_p):&=&\sigma(y)\eta(y_1)\eta(y_2)\eta(y_3)h_{yy_1y_2y_3,\sigma(y)\eta(y_1)\eta(y_2)\eta(y_2)}\\
&+&\sigma(z)\eta(z_1)\eta(z_2)\eta(z_3) h_{zz_1z_2z_3,\sigma(z)\eta(z_1)\eta(z_2)\eta(z_3)})\\
&+&\sigma(w)\eta(w_1)\eta(w_2)\eta(w_3)
h_{ww_1w_2w_3,\sigma(w)\eta(w_1)\eta(w_2)\eta(w_3)}).
\end{eqnarray*}
Furthermore, equation \eqref{ham1} provides that
\begin{eqnarray*}\label{Kolmogorov1}
&&\exp [-\beta H_n(\sigma_{n-1})+\sum\limits_{x\in
W_{n-2}}\sum\limits_{y,z,w\in S(x)}
\sigma(x)\sigma(y)\sigma(z)\sigma(w)h_{xyzw,\sigma(x)\sigma(y)\sigma(z)\sigma(w)}]\\
&=&L_n\sum\limits_{\eta\in \Omega^{W_{n}}}\exp[-\beta
H_n(\sigma_{n-1})-\beta J\sum\limits_{x\in
W_{n-1}}\sum\limits_{y\in S(x)}\sigma(x)\eta(y)\\
&&-\beta J_p\sum\limits_{x\in W_{n-2}}\sum\limits_{z\in
S^2(x)}\sigma(x)\eta(z)+\sum\limits_{y,z,w\in
W_{n-1}}\sum\limits_{y_i\in S(y)}\sum\limits_{z_i\in
S(z)}\sum\limits_{w_i\in S(w)}B(h,J,J_p)].\\\nonumber
\end{eqnarray*}
For all $i=1,2,3$, we get
\begin{eqnarray}\label{Kolmogorov2}
&&\prod\limits_{x\in W_{n-2}}\prod\limits_{y,z,w\in
S(x)}e^{[\sigma(x)\sigma(y)\sigma(z)\sigma(w)h_{xyzw,\sigma(x)\sigma(y)\sigma(z)\sigma(w)}]}\\\nonumber
&=&L_n\prod\limits_{x\in W_{n-2}}\prod\limits_{y,z,w\in
S(x)}\prod\limits_{y_i\in S(y)}\prod\limits_{z_i\in
S(z)}\prod\limits_{w_i\in
S(w)}\sum\limits_{\eta(x_i),\eta(y_i),\eta(z_i),\eta(w_i)\in
\Phi}e^{[A(h,J,J_p)]},\\\nonumber
\end{eqnarray}
where
\begin{eqnarray*}
A(h,J,J_p)&=&\sigma(y)\eta(y_1)\eta(y_2)\eta(y_3)h_{yy_1y_2y_3,\sigma(y)\eta(y_1)\eta(y_2)\eta(y_3)}\\
&+&\sigma(z)\eta(z_1)\eta(z_2)\eta(z_3) h_{zz_1z_2z_3,\sigma(z)\eta(z_1)\eta(z_2)\eta(z_3)})\\
&+&\sigma(w)\eta(w_1)\eta(w_2)\eta(w_3) h_{ww_1w_2w_3,\sigma(w)\eta(w_1)\eta(w_2)\eta(w_3)})\\
&+&\beta[J(\sigma(y)(\eta(y_1)+\eta(y_2)+\eta(y_3))+\sigma(z)(\eta(z_1)+\eta(z_2)\\
&&+\eta(z_3)+\sigma(z)(\eta(z_1)+\eta(z_2)+\eta(z_3))]\\
&+&\beta\left[J_p\sigma(x)(\sum_{i=1}^3(\eta(w_i)+\eta(y_i)+\eta(z_i)))\right].
\end{eqnarray*}

\begin{figure} [!htbp]\label{fig2}
\centering
\includegraphics[width=55mm]{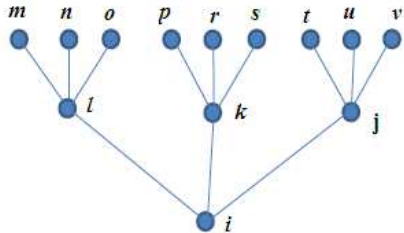}
\caption{Configurations on semi-finite Cayley tree of order three
with levels 2}\label{fig2}
\end{figure}
Next, let us fix $<x,y>$, $<x,z>$ and $<x,w>$ by rewriting
\eqref{Kolmogorov2} for all values of $\sigma(x), \sigma(y),
\sigma(z),\sigma(w)\in \Phi $. For the sake of simplicity, we
assume $\sigma(x)=i$, $\sigma(y)=j$, $\sigma(z)=k$, $\sigma(w)=l$,
$\eta(y_1)=u$, $\eta(y_2)=v$, $\eta(y_3)=t$, $\eta(z_1)=s$,
$\eta(z_2)=r$, $\eta(z_3)=p$, $\eta(w_1)=0$, $\eta(w_2)=n$,
$\eta(w_3)=m$,
where $i,j,k,l,m,n,o,p,r,s,t,u,v\in \Phi $ (see Figure \ref{fig2}).\\
Then from equation \eqref{Kolmogorov2}, we can obtain an explicit
expression
\begin{eqnarray}\label{exponent1}
e^{ijkl\mathbf{h}_{B_1(x),i;j,k,l}}&=&L_2\sum\limits_{m,n,o,p,r,s,t,u,v\in
\Phi}[e^{\beta J_p i(m+n+o+p+r+s+t+u+v)}\\\nonumber
&\times &e^{\beta J(l(m+n+o)+k(p+r+s)+j(t+u+v))}\\
&\times
&e^{jtuv\mathbf{h}_{B_1(y),j;t,u,v}+kprs\mathbf{h}_{B_1(z),k;s,r,p}+lmno\mathbf{h}_{B_1(w),l;o,n,m}}],\nonumber
\end{eqnarray}
where $L_2=\frac{Z_{1}}{Z_{2}}$.\\
Let
\begin{eqnarray}\label{h-funct}
&&h_1=h_{B_1(x),\sigma_1^{(1)}},\\
&&h_2=h_{B_1(x),\sigma_2^{(1)}}=h_{B_1(x),\sigma_3^{(1)}}=h_{B_1(x),\sigma_4^{(1)}},\\
&&h_3=h_{B_1(x),\sigma_5^{(1)}}=h_{B_1(x),\sigma_6^{(1)}}=h_{B_1(x),\sigma_7^{(1)}},\\
&&h_4=h_{B_1(x),\sigma_8^{(1)}},
\end{eqnarray}
\begin{eqnarray}\label{h-funct5}
&&h_5=h_{B_1(x),\sigma_9^{(1)}},\\
&&h_6=h_{B_1(x),\sigma_{10}^{(1)}}=h_{B_1(x),\sigma_{11}^{(1)}}=h_{B_1(x),\sigma_{12}^{(1)}},\\
&&h_7=h_{B_1(x),\sigma_{13}^{(1)}}=h_{B_1(x),\sigma_{14}^{(1)}}=h_{B_1(x),\sigma_{15}^{(1)}},\\\label{h-funct8}
&&h_8=h_{B_1(x),\sigma_{16}^{(1)}}.
\end{eqnarray}
Therefore, we can redefine the vector-valued function given in
\eqref{consistency}  as follows:
\begin{equation}\label{consistency1}
\textbf{h}(x)=(h_1,h_2,h_3,h_4,h_5,h_6,h_7,h_8).
\end{equation}
%\section{Translation-invariant Gibbs measures with memory of length 2}

\subsection{Basic Equations}

Assume that $a=e^{\beta J}$ and $b=e^{\beta J_p}$. By using the
equations \eqref{h-funct}-\eqref{h-funct8}, we can take new
variables $u'_{i}=e^{h_{B_1(x),\sigma_{j}^{(1)}}}$ for $x\in
W_{n-1}$ and $u_{i}=e^{h_{B_1(y),\sigma_{j}^{(1)}}}$ for $y\in
S(x)$.
%, where $i$ and $j$ are
For convenience, we will use a shorter notation for the recurrence
system \cite{Vannimenus}.
%$h_{xyz,+++}=h_{[+;++]}=\ln (u_1)$, $h_{xyz,+-+}=h_{xyz,++-}=h_{[+;-+]}=\ln (u_2)$, $h_{xyz,+--}=h_{[+;--]}=\ln (u_3)$, $h_{xyz,-++}=h_{[-;++]}=\ln %(u_4)$, $h_{xyz,-+-}=h_{xyz,--+}=h_{[-;+-]}=\ln (u_5)$, $h_{xyz,---}=h_{[-;--]}=\ln (u_6)$.\\
From \eqref{exponent1}, through direct enumeration, we obtain the
following eight equations:
%$$
%e^{-9 J-9 S-9 h_2-3 h_4} \left(e^{3 h_2}+3 e^{4 J+4 S+2
%h_2+h_4}+e^{6 J+6 S+h_1+3 h_2+h_4}+3 e^{2 J+2 S+3
%h_2+h_3+h_4}\right)^3
%$$
\begin{eqnarray}\label{recurrence1aa}
u'_{1}&=&L_2\left((ab)^3u_1+\frac{3ab}{u_2}+\frac{3u_3}{ab}+\frac{1}{(ab)^3u_4}\right)^3,\\
(u'_{2})^{-1}&=&L_2\left((ab)^3u_1+\frac{3ab}{u_2}+\frac{3u_3}{ab}+\frac{1}{(ab)^3u_2}\right)^2\nonumber\\
&\times&\left(\frac{b^3}{a^3u_5}+\frac{3bu_6}{a}+\frac{3a}{bu_7}+\frac{a^3u_8}{b^3}\right),\\\nonumber
u'_{3}&=&L_2\left((ab)^3u_1+\frac{3ab}{u_2}+\frac{3u_3}{ab}+\frac{1}{(ab)^3u_2}\right)\nonumber\\
&\times&\left(\frac{b^3}{a^3u_5}+\frac{3bu_6}{a}+\frac{3a}{bu_7}+\frac{a^3u_8}{b^3}\right)^2,\\
(u'_{4})^{-1}&=&L_2\left(\frac{b^3}{a^3u_5}+\frac{3bu_6}{a}+\frac{3a}{bu_7}+\frac{a^3u_8}{b^3}\right)^3,
\end{eqnarray}

\begin{eqnarray}\label{recurrence2aa}
(u'_{5})^{-1}&=&L_2\left(\frac{a^3u_1}{b^3}+\frac{3a}{bu_2}+\frac{3bu_3}{a}+\frac{b^3}{a^3u_4}\right)^3,\\
u'_{6}&=&L_2\left(\frac{a^3u_1}{b^3}+\frac{3a}{bu_2}+\frac{3bu_3}{a}+\frac{b^3}{a^3u_4}\right)^2\nonumber\\
&\times&\left(\frac{1}{(ab)^3u_5}+\frac{3u_6}{ab}+\frac{3ab}{u_7}+(ab)^3u_8\right),\\
(u'_{7})^{-1}&=&L_2\left(\frac{a^3u_1}{b^3}+\frac{3a}{bu_2}+\frac{3bu_3}{a}+\frac{b^3}{a^3u_4}\right)\nonumber\\
&\times&\left(\frac{1}{(ab)^3u_5}+\frac{3u_6}{ab}+\frac{3ab}{u_7}+(ab)^3u_8\right)^2,\\
u'_{8}&=&L_2\left(\frac{1}{(ab)^3u_5}+\frac{3u_6}{ab}+\frac{3ab}{u_7}+(ab)^3u_8\right)^3.\label{recurrence2aa4}
\end{eqnarray}

From the equations \eqref{recurrence1aa}-\eqref{recurrence2aa4},
it is obvious that
\begin{eqnarray*}
(u'_2)^{3}&=&\frac{(u'_4)}{(u'_1)^{2}},\\
 (u'_3)^{3}&=&\frac{(u'_1)}{(u'_4)^{2}},\\
(u'_6)^{3}&=&\frac{(u'_8)}{(u'_5)^{2}},\\
(u'_7)^{3}&=&\frac{(u'_5)}{(u'_8)^{2}}.
\end{eqnarray*}
Therefore, selecting variables $u'_1$, $u'_4$ $u'_5$ and $u'_8$,
we obtain only 4 variables.
\begin{rem} If the vector-valued function $\textbf{h}(x)$ given in \eqref{consistency1} has the following form:
\begin{equation*}
\textbf{h}(x)=(p,\frac{q-2p}{3},\frac{p-2q}{3},q,r,\frac{s-2r}{3},\frac{r-2s}{3},s),
\end{equation*}
then the \emph{consistency} condition \eqref{comp} is satisfied,
where $p,q,r,s\in \mathbb{R}$.
\end{rem}
Considering new variables $u_i=v_i^{3}$ for $i=1,4,5,8$, following
recurrent equations a new recurrence system can be expressed in a
simpler form:
\begin{eqnarray}\label{recurrence4}
(v'_{1})&=&\sqrt[3]{L_2}\left(\frac{1+(ab)^{2}v_{1}v_{4}}{abv_{4}}\right)^3,\\\label{recurrence4a}
(v'_{4})^{-1}&=&\sqrt[3]{L_2}\left(\frac{b^{2}+a^{2}v_{5}v_{8}}{abv_{5}}\right)^3,\\\label{recurrence4b}
(v'_{5})^{-1}&=&\sqrt[3]{L_2}\left(\frac{b^{2}+a^{2}v_{1}v_{4}}{abv_{4}}\right)^3,\\\label{recurrence4c}
(v'_{8})&=&\sqrt[3]{L_2}\left(\frac{1+(ab)^{2}v_{5}v_{8}}{abv_{5}}\right)^3.\label{recurrence4d}
\end{eqnarray}
The solutions of this system of nonlinear equations
\eqref{recurrence4}-\eqref{recurrence4c} describe
translationЦinvariant Gibbs measures.
%%\begin{eqnarray}\label{recurrence4}
%%(v'_{1})^{3}&=&\frac{v_{4}^{6}{{\left(
%%1+{{a}^{2}}{{b}^{2}}{{v}_{1}}{{v}_{4}}
%%\right)}^{9}}}{{{a}^{6}}{{b}^{9}}v_{1}^{3}}\\\label{recurrence4a}
%%(v'_{4})^{-3}&=&\frac{{{\left(
%%{{b}^{2}}+{{a}^{2}}{{v}_{5}}{{v}_{8}}
%%\right)}^{9}}}{{{a}^{12}}{{b}^{9}}v_{5}^{9}}\\\label{recurrence4b}
%%(v'_{5})^{-3}&=&\frac{v_{4}^{6}{{\left(
%%{{b}^{2}}+{{a}^{2}}{{v}_{1}}{{v}_{4}}
%%\right)}^{9}}}{{{a}^{12}}{{b}^{9}}v_{1}^{3}}\\\label{recurrence4c}
%%(v'_{8})^{3}&=&\frac{{{\left(
%%1+{{a}^{2}}{{b}^{2}}{{v}_{5}}{{v}_{8}}
%%\right)}^{9}}}{{{a}^{6}}{{b}^{9}}v_{5}^{9}}.\label{recurrence4d}
%%\end{eqnarray}
\section{Translation-invariant Gibbs measures}\label{TIGM}
In this section, we are going to focus on the existence of
translation-invariant Gibbs measures (TIGMs) by analyzing the
equation \eqref{exponent1}. Note that a function $\textbf{h} =
\{h_{B_1(x),\sigma_i^{(1)}}:i\in \{1,2,\ldots,16\} \}$ is
considered as translation-invariant if $h_{B_1(x),\sigma_i^{(1)}}=
h_{B_1(y),\sigma_i^{(1)}}$ for all $y\in S(x)$ and $i\in
\{1,2,\ldots,16\}$. A translation-invariant Gibbs measure is
defined as a measure, $\mu_{\textbf{h}}$, corresponding to a
translation-invariant function $\textbf{h}$ (see for details
\cite{NHSS,Rozikov}). Here we will assume that $v'_{i}=v_{i}$ for
all $i\in \{1,4,5,8\}$.\\
The analysis of the solutions of the system of equations
\eqref{recurrence4}-\eqref{recurrence4d} is rather tricky. Below
we will consider the following case when the system of equations
\eqref{recurrence4}-\eqref{recurrence4d} is solvable for set

\begin{eqnarray}\label{invariantA}
A=\left\{(v_1,v_4,v_5,v_8)\in
\mathbb{R}_+^4:v_{1}=v_{4}^{3},v_{8}=v_{5}^{3}\right\}.
\end{eqnarray}
Now, we want to find Gibbs measures for  considered case. To do
so, we introduce some notations. Define the transformation

\begin{eqnarray}\label{cavity}
\textbf{F}=(F_1, F_4, F_5, F_8): \mathbf{R}^4_+ \rightarrow
\mathbf{R}^4_+
\end{eqnarray}
with $v'_1=F_1(v_1,v_4,v_5,v_8)$, $v'_4=F_4(v_1,v_4,v_5,v_8)$,
$v'_5=F_5(v_1,v_4,v_5,v_8)$ and $v'_8=F_8(v_1,v_4,v_5,v_8)$. The
fixed points of the cavity equation
$\textbf{v}=\textbf{F}(\textbf{v})$ given in the Eq.
\eqref{cavity} describe the translation-invariant Gibbs measures
of the Ising model corresponding to the Hamiltonian \eqref{hm},
where $\textbf{v}=(v_1,v_4,v_5,v_8)$.

Divide \eqref{recurrence4} by \eqref{recurrence4a}, then we have
\begin{eqnarray}\label{recurrence4e}
v_4^7v_5^{-3}=\left(\frac{1+(ab)^2v_4^4}{b^2+a^2v_5^4}\right)^3.
\end{eqnarray}
Similarly, divide \eqref{recurrence4c} by \eqref{recurrence4b},
then one gets
\begin{eqnarray}\label{recurrence4f}
v_5^7v_4^{-3}=\left(\frac{1+(ab)^2v_5^4}{b^2+a^2v_4^4}\right)^3.
\end{eqnarray}
%
%\begin{eqnarray}\label{recurrence4e}
%v_{1}=\frac{{{\left( 1+{{a}^{2}}{{b}^{2}}v_{4}^{3}
%\right)}^{3}}}{{{a}^{2}}{{b}^{3}}}=v_{4}^{2}
%\end{eqnarray} and
%\begin{eqnarray}\label{recurrence4f}
%{{v}_{4}}=\frac{{{a}^{4}}{{b}^{3}}v_{8}^{6}}{{{\left(
%{{b}^{2}}+{{a}^{2}}v_{8}^{3} \right)}^{3}}}.
%\end{eqnarray}
Multiply the equations \eqref{recurrence4e} and
\eqref{recurrence4f}, we obtain
\begin{eqnarray*}\label{recurrence5}
v_4^4v_5^{4}=\left(\frac{1+(ab)^2v_4^4}{b^2+a^2v_5^4}\right)^3\left(\frac{1+(ab)^2v_5^4}{b^2+a^2v_4^4}\right)^3.
\end{eqnarray*}
Assume that $v_4^4=v_5^{4}=x$, then we get
\begin{eqnarray}\label{recurrence5a}
x=\left(\frac{1+(ab)^2x}{b^2+a^2x}\right)^3.
\end{eqnarray}
Therefore, we will study the following nonlinear dynamical system
\begin{equation*}\label{function7a}
f(x)=\left(\frac{1+(ab)^2x}{b^2+a^2x}\right)^3.
\end{equation*}
Let us find the fixed points of the function $f.$ For brevity, we
assume that $e^{2J/T}=a^{2}=c$ and $ e^{2J_p/T}=b^{2}=d,$ where
$T$ is the absolute temperature.

One can show that the function $f$ is conjugate to the following
function
\begin{equation}\label{function7}
g(x)=\left(\frac{1+cd x}{d+c x}\right)^3.
\end{equation}
Thus, the study of the problem of phase transition for the
considered model \eqref{hm} is reduced to the investigation of the
fixed points of nonlinear dynamical system \eqref{function7}.

Creating conditions favorable to the occurrence of phase
transition depends in part on finding a so-called critical
temperature. Note that the equations above describe the fixed
points of equation \eqref{function7}, which satisfies the
consistency condition. When there is more than one solution for
the equation \eqref{function7}, then more than one
translation-invariant Gibbs measure corresponds to those
solutions. Thus, the equation \eqref{function7} have more than one
positive solution, a phase transition occurs for model \eqref{hm}.
This possible non-uniqueness corresponds in the language of
statistical mechanics to the phenomenon of phase transition
\cite{Preston}. Phase transitions usually occur at low
temperatures. Finding an exact value for $T_c$, where $T_c$ is the
critical value of temperature, can enable the creation of
conditions in which a phase transition occurs for all $T$. Solving
models for $T_c$ will lead to finding the exact value of the
critical temperatures.

The number of fixed points of the function \eqref{function7}
naturally depends on the parameters $\beta=1/kT$ and the coupling
constants $J$ and $J_p$. Thus, we will find positive fixed points
of the nonlinear dynamical system \eqref{function7}. Therefore,
the fixed points of the cavity equation
$\textbf{v}=\textbf{F}(\textbf{v})$ given in the Eq.
\eqref{cavity} will describe translation-invariant Gibbs measures
with memory of length 2 for the Ising-Vannimenus model under
conditions \eqref{invariantA}, where
$\textbf{v}= (v_1,v_4, v_5,v_8)$.\\
Let us now investigate the fixed points of the dynamic system
\eqref{function7}, i.e., $x = g(x)$.
%, where
%\begin{eqnarray}\label{recurrence16a}
%g(x)=\frac{c\left( 1+{{c}^{3}}d{{x}^{3}} \right)}{\left( d+{{c}^{3}}{{x}^{3}} \right)}.
%\end{eqnarray}
If we define $g: \mathbb{R}^{+}\rightarrow \mathbb{R}^{+}$ then
$g$ is bounded and thus the curve $y = g(x)$ must intersect the
line $y =m x.$ Therefore, this construction provides one element
of a new set of Gibbs measures with memory of length 2,
corresponding to the model  \eqref{hm} for any $x\in
\mathbb{R}^{+}$.
\begin{prop}\label{proposition}
The equation
\begin{eqnarray}\label{recurrence16b}
x=\left(\frac{1+cd x}{d+c x}\right)^3
\end{eqnarray}
(with $x \geq 0, c > 0, d > 0$) has one solution if  $d<1$. If $d
> 2$ then there exists $\eta_1(c,b)$, $\eta_2(c,d)$ with
$0<\eta_1(c,d)<\eta_2(c,d)$ such that equation
\eqref{recurrence16b} has 3  solutions if
$\eta_1(c,d)<m<\eta_2(c,d)$ and has 2 solutions if either
$\eta_1(c,d)=m$ or $\eta_2(c,d)=m$, where
\begin{eqnarray*}
\eta_1 (c,d)&=&-\frac{c d^4 \left(1-d^2+\sqrt{4-5 d^2+d^4}\right)^3}{\left(2-2 d^2+\sqrt{4-5 d^2+d^4}\right)^3 \left(2-d^2+\sqrt{4-5 d^2+d^4}\right)}\\
\eta_2 (c,d)&=&\frac{c d^4 \left(-1+d^2+\sqrt{4-5
d^2+d^4}\right)^3}{\left(-2+d^2+\sqrt{4-5 d^2+d^4}\right)
\left(-2+2 d^2+\sqrt{4-5 d^2+d^4}\right)^3}.
\end{eqnarray*}

%%\begin{eqnarray*}
%%\eta_1 (c,d)&=&-\frac{c^2d \left(-1+d^2-\sqrt{4-5 d^2+d^4}\right)}{\left(d^{-1}
%%\left(-2+d^2-\sqrt{4-5 d^2+d^4}\right)\right)^{1/3} \left(2-2 d^2+\sqrt{4-5 d^2+d^4}\right)}\\
%%\eta_2 (c,d)&=&\frac{c^2 d \left(-1+d^2+\sqrt{4-5
%%d^2+d^4}\right)}{\left(d^{-1}\left(-2+d^2+\sqrt{4-5
%%d^2+d^4}\right)\right)^{1/3} \left(-2+2 d^2+\sqrt{4-5
%%d^2+d^4}\right)}.
%%\end{eqnarray*}
\end{prop}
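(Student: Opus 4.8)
The plan is to turn the counting problem into the geometry of a line through the origin meeting a fixed graph. Following the discussion preceding the statement, the solutions of \eqref{recurrence16b} are the positive intersection points of $y=g(x)$, with $g(x)=\left(\frac{1+cdx}{d+cx}\right)^3$, and the line $y=mx$ (the fixed-point equation being the case $m=1$). First I would record the shape of $g$. Writing $g=\phi^3$ with $\phi(x)=\frac{1+cdx}{d+cx}$, a direct computation gives $\phi'(x)=\frac{c(d^2-1)}{(d+cx)^2}$, so $\phi$, and hence $g$, is strictly decreasing when $d<1$ and strictly increasing when $d>1$; moreover $g(0)=d^{-3}$ and $g(x)\to d^3$ as $x\to\infty$. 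The whole analysis is then cleanest in terms of the slope function $\psi(x):=g(x)/x$ on $(0,\infty)$, since \eqref{recurrence16b} is equivalent to $\psi(x)=m$, and $\psi(0^+)=+\infty$, $\psi(\infty)=0$.

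For the uniqueness claim when $d<1$, I would note that $g$ is positive and decreasing while $1/x$ is positive and decreasing, so $\psi$ is strictly decreasing from $+\infty$ to $0$; hence $\psi(x)=m$ has exactly one positive root, giving one solution.

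For $d>1$ I would analyze the critical points of $\psi$. The condition $\psi'(x)=0$ is $xg'(x)=g(x)$, which is exactly the tangency condition for a line through the origin. Using $g=\phi^3$ and $g'=3\phi^2\phi'$, dividing by $\phi^2$ reduces it to $3x\phi'(x)=\phi(x)$, and clearing denominators yields the quadratic
\[
c^2d\,x^2-2c(d^2-2)x+d=0,
\]
whose roots are
\[
x_\pm=\frac{d^2-2\pm\sqrt{d^4-5d^2+4}}{cd}.
\]
Its product of roots is $1/c^2>0$, its sum is $\frac{2(d^2-2)}{cd}$, and its discriminant is proportional to $d^4-5d^2+4=(d^2-1)(d^2-4)$. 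Thus two positive critical points exist precisely when the discriminant is positive and the sum is positive, i.e.\ exactly when $d>2$; for $1<d\le 2$ there are none, $\psi$ is monotone, and one again gets a single solution.

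In the regime $d>2$, since $\psi$ runs from $+\infty$ down and then back to $0$, the smaller critical point $x_-$ is a local minimum and the larger $x_+$ a local maximum, so $\psi$ descends to a minimum, rises to a maximum, then descends to $0$. Setting $\eta_1:=\psi(x_-)$ and $\eta_2:=\psi(x_+)$ gives $0<\eta_1<\eta_2$, and the horizontal-line count of $\psi(x)=m$ reads off at once: three solutions for $\eta_1<m<\eta_2$, two solutions when $m=\eta_1$ or $m=\eta_2$ (a tangency), and one solution otherwise. The main obstacle is the last step: substituting $x_\pm$ into $\psi(x)=\phi(x)^3/x$ and simplifying to the stated closed forms for $\eta_1(c,d)$ and $\eta_2(c,d)$ is a heavy algebraic manipulation, and one must be careful to pair the smaller root $x_-$ with the minimum value $\eta_1$ and the larger root $x_+$ with the maximum value $\eta_2$ so that the ordering and the signs in the displayed formulas come out correctly.
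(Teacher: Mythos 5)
Your proof is correct, and at its core it rests on exactly the same computation as the paper's: the tangency condition $xg'(x)=g(x)$, which both you and the author reduce to the quadratic $c^2dx^2-2c(d^2-2)x+d=0$ with roots $x_{\pm}=\frac{(d^2-2)\pm\sqrt{d^4-5d^2+4}}{cd}$, positive and distinct precisely when $d>2$. But the surrounding counting argument is genuinely different in packaging. The paper first bounds the number of solutions by $3$ via a concavity analysis ($g$ concave up on $(0,\frac{d^2-2}{cd})$, concave down beyond) and then invokes Preston's Proposition 10.7 for the criterion that non-uniqueness forces non-uniqueness of solutions of $xg'(x)=g(x)$; its final step is only sketched, asserting $g'(x_1^*)<1<g'(x_2^*)$ ``if $\eta_1<1<\eta_2$'' and declaring the proof complete, without ever deriving the displayed formulas for $\eta_1,\eta_2$ or treating the two-solution boundary cases. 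Your route through the slope function $\psi(x)=g(x)/x$ is more self-contained and in fact more complete: it needs no external citation, it correctly interprets the otherwise undefined parameter $m$ as the slope of the line through the origin (as the discussion before the proposition suggests), it treats every $m$ uniformly and yields the exact counts including the tangency cases $m=\eta_1$ and $m=\eta_2$, and it identifies $\eta_1=\psi(x_-)$, $\eta_2=\psi(x_+)$ explicitly. The one step you defer does come out as stated: with $s=\sqrt{d^4-5d^2+4}$ one has $cdx_-=d^2-2-s$, hence $1+cdx_-=d^2-1-s$ and $d+cx_-=(2d^2-2-s)/d$, so $\psi(x_-)=\frac{cd^4\left(d^2-1-s\right)^3}{\left(d^2-2-s\right)\left(2d^2-2-s\right)^3}$, which equals the stated $\eta_1$ after flipping the three sign-reversed factors (all of $d^2-1-s$, $d^2-2-s$, $2d^2-2-s$ are positive for $d>2$ since $s<d^2-2$), and similarly $\psi(x_+)$ matches $\eta_2$; your pairing of the smaller root with the local minimum is forced by $\psi$ decreasing from $+\infty$ near $0$, so the ordering $0<\eta_1<\eta_2$ is right. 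In short, your argument is a valid and arguably tighter proof than the paper's own, trading the concavity-plus-Preston machinery for an elementary critical-point analysis of $g(x)/x$.
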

\begin{proof} Let
$$
g(x)=\left(\frac{1+c d x}{d+c x}\right)^3.
$$
Then, taking the first and the second derivatives of the function
$g$, we have
$$
g'(x)=\frac{3 c (d^2-1)(1+c d x)^2}{(d+c x)^4},
$$
$$
g''(x)=-\frac{6 c^2(d^2-1)(1+c d x) \left(2-d^2+c d x\right)}{(d+c
x)^5}.
$$
If $d<1$ (with $x \geq0$) then $g$ is decreasing and there can
only be one solution of $g(x)=x.$ Thus, we can restrict ourselves
to the case in which $d>1.$  It is not hard to show by simple
calculus arguments that the graph of $y=g(x)$ over interval
$(0,\frac{d^2-2}{cd})$ is concave up  and the graph of $y=g(x)$
over interval $(\frac{d^2-2}{cd},\infty)$ is concave down. As a
result, there are at most 3 positive solutions for $g(x) =x.$
According to Preston \cite[Proposition 10.7]{Preston}, there can
be more than one solution if and only if there is more than one
solution to $xg'(x) = g(x)$, which is the same as
\begin{equation}\label{root1}
c^2 d x^2-2c\left( d^2-2\right) x+d=0.
\end{equation}
With some elementary analysis, it is clear that if $d>\sqrt{2}$
and $(d^2-4) (d^2-1)>0$ then the quadratic equation \eqref{root1}
has 2 solutions. The solutions are
$$
x^{*}_1=\frac{(d^2-2)-\sqrt{ \left(4-5 d^2+d^4\right)}}{c d},\ \ \
\ \ \ \ x^{*}_2=\frac{(d^2-2)+\sqrt{ \left(4-5 d^2+d^4\right)}}{c
d},
$$
where $d>2$ due to $d >1$. Then $g'(x^{*}_1) < 1$ and $g'(x^{*}_2)
> 1$. That is, $g(x^{*}_1) <x^{*}_1$ and $g(x^{*}_2) > x^{*}_2$,
if $\eta_1(c,d) < 1 <\eta_2(c,d)$. So, the proof is readily
completed.
\end{proof}
If the collection ${h_{B_1(x)}, x\in V_{0}}$ satisfies the
equation \eqref{exponent1} for any $x\in V_{0}$, then
$|h_{B_1(x)}|\leq h_{*}$, for any $x\in V^{0}$, and if
$h_{B_1(x)}= h_{*}$, (or $h_{B_1(x)}=-h_{*}$), then
$h_{B_1(y)}=h_{*}$, (respectively, $h_{B_1(y)}=-h_{*}$,) for any
$y\geq x$ (see for details \cite{BG}).

It is very important that the equation \eqref{exponent1} describes
all the relations between the quantities $\{h_{B_1(x)}, x\in
V_{0}\}$.
%%Thus, we have proved the following theorem.
%%\begin{theorem}\label{theorem} If $d<1$ then there exists
%%a unique translation-invariant Gibbs measure, and if $d>\sqrt{2}$
%%and $\Delta= c^2 (d^2-4) (d^2-1)>0$ then there exist three
%%translation-invariant Gibbs measures, that is, phase transition
%%occurs.
%%\end{theorem}

\subsection{An illustrative example}
%\end{proof}
\begin{figure} [!htbp]\label{fig1Cregion}
\centering
\includegraphics[width=60mm]{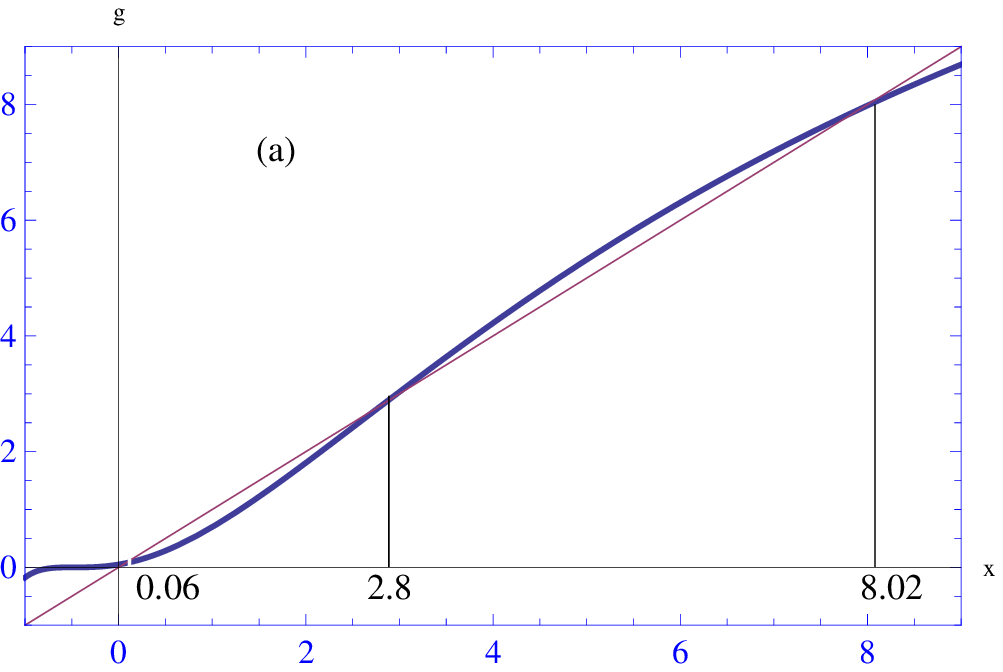}\label{fig1c}
\includegraphics[width=60mm]{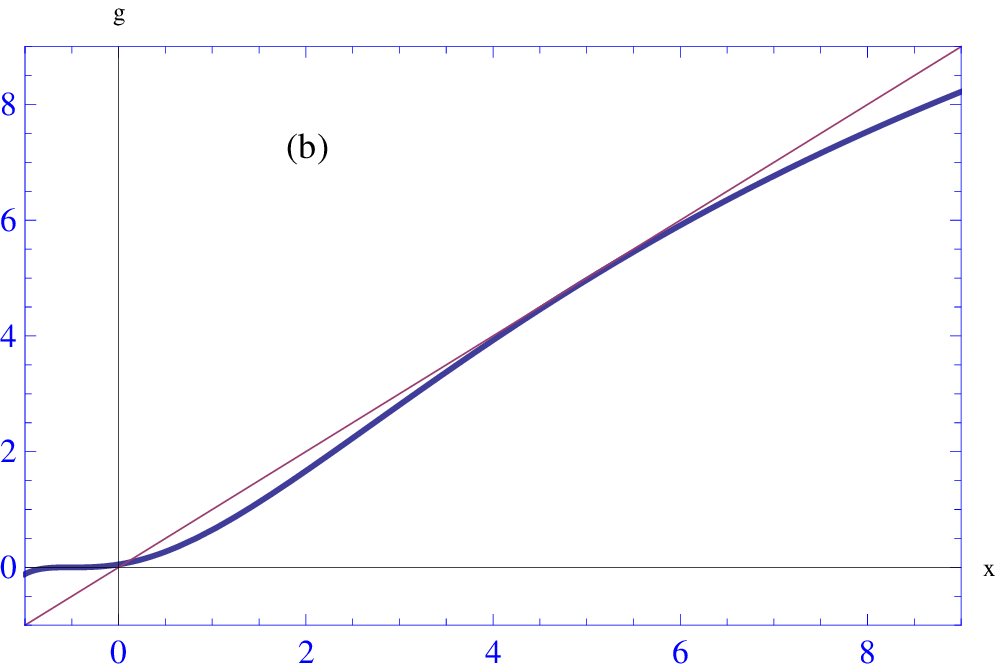} \label{fig1c}
\includegraphics[width=60mm]{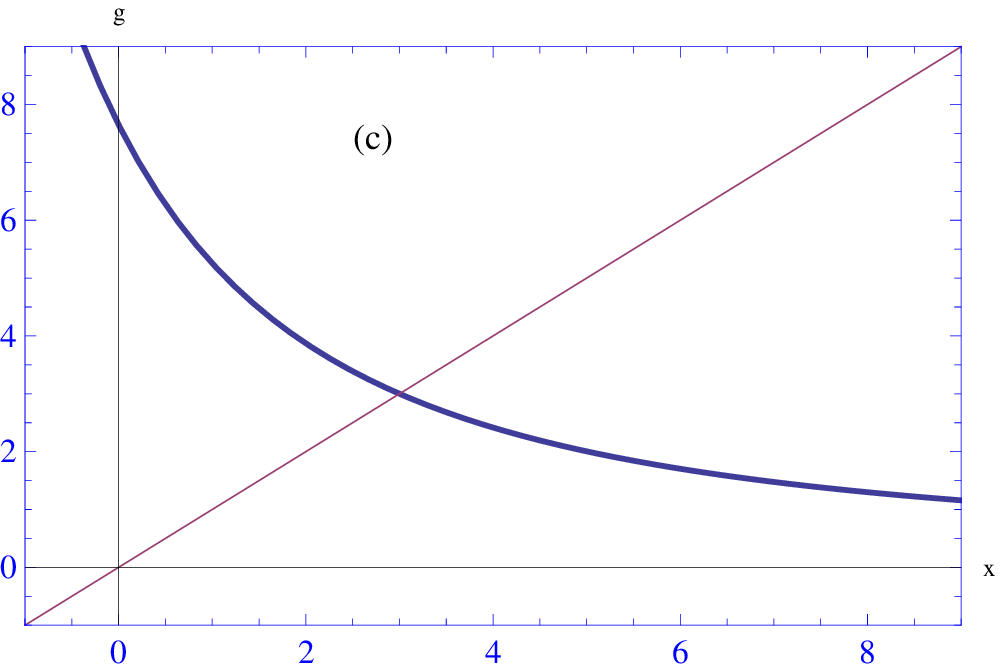}
\caption{(a) There exist three positive roots of the equation
\eqref{function7} for $J = -1.7, J_p =6.5, T = 13$. (b) There
exist two positive roots of the equation \eqref{function7} for $J
= -1.045, J_p = -1.045, T = 6.55.$ (c) There exists only one
positive root of the equation \eqref{function7} for $J = 6.75, J_p
= 1.95, T = -5.75$.}\label{fig1Cregion}
\end{figure}
%\begin{figure} [!htbp]\label{fig1c}
%\centering
%%\includegraphics[width=65mm]{cayley.eps}\ \ \ \ \ \ \ \ \ \ \ \
%\includegraphics[width=75mm]{1C-3Roots.eps}
%\caption{The positive three roots of the equation \eqref{recurrence6} for $J = -9.15, T = 8.64, J_p= 2.62$}\label{fig1c}
%\end{figure}
%\begin{figure} [!htbp]\label{fig1c3D}
%\centering
%%\includegraphics[width=75mm]{C1-3D-Region-Phase.eps}\ \ \ \ \ \ \ \ \ \ \ \
%\includegraphics[width=60mm]{1C-3Roots3D-region.eps}
%\caption{Three-dimensional region for the positive three roots of the equation \eqref{recurrence6a}. For all $(J,J_p,T)$ inside the surface, a phase transition occurs ($J\in (-40, 20), J_p\in (0, 20), T\in(0.00012, 40)$.}\label{fig1c3D}
%\end{figure}
%If we take the values $a= e^{((-2\times 9.15)/(8.64))}= 0.12027, b=e^{((2.62)/(8.64))}= 1. 3542$, then the point $A(0.12027, 1. 3542)$ is contained in the region of three positive roots of the function $f$ (see the Fig. \ref{fig1Cregion} (b)).
Elementary analysis allows us to obtain the fixed points of the
function \eqref{function7} by finding real positive roots of
equation \eqref{recurrence16b}. Thus, we can obtain a polynomial
equation of degree 4. Previously documented analysis has solved
these equations, which we will not show here due to the
complicated nature of formulas and coefficients \cite{Wolfram}.
Nonetheless, we have manipulated the polynomial equation via
Mathematica \cite{Wolfram}.
%http://mathworld.wolfram.com/PolynomialEquation.html),
We have obtained 3 positive real roots for some parameters $J$ and
$J_p$ (coupling constants) and temperature $T$. As an illustrative
example,  Fig. \ref{fig1Cregion} (a) shows that there are  3
positive fixed points of the function \eqref{function7} for $J =
-1.7, J_p =6.5, T = 13$ and $m=1$. Therefore, for $J = -1.7, J_p
=6.5, T = 13$ we have demonstrated the occurrence of phase
transitions. Fig. \ref{fig1Cregion} (b) shows that there are two
positive fixed points of the function $g$ for $J = -1.045, J_p =
-1.045, T = 6.55$. In Figure \ref{fig1Cregion} (c), there exists
only single positive fixed point of the function \eqref{function7}
for $J = 6.75, J_p = 1.95, T = -5.75$. Therefore,  the phase
transition does not occur for $J = 6.75, J_p = 1.95, T = -5.75$.

From the Fig. \ref{fig1Cregion} (a), let us consider
$x_1^{*}\approx 0.06,x_2^{*}\approx 2.8,x_3^{*}\approx 8.02$.
Figure \ref{fig1Cregion}a illustrates that for all $x\in
(x_2^{*},x_3^{*})$, $\lim\limits_{n\rightarrow
\infty}g^n(x)=x_3^{*}.$ Similarly, for all $x\in
(x_1^{*},x_2^{*})$, $\lim\limits_{n\rightarrow
\infty}g^n(x)=x_1^{*}.$ Therefore, the fixed points $x_1^{*}$ and
$x_3^{*}$ are stable and $x_2^{*}$ is unstable.

Therefore, there is a critical temperature $T_c > 0$ such that for
$T < T_c$ this system of equations has 3 positive solutions:
$h_1^{*}; h_2^{*};h_3^{*}.$  We denote the Gibbs measure that
corresponds to the root $h_1^{*}$ (and respectively
$h_2^{*};h_3^{*}$) by $\mu^{(1)}$ (and respectively
$\mu^{(2)}$,$\mu^{(3)}$).

\begin{rem}
Note that the stable roots describe extreme Gibbs distributions.
Therefore, from the Figure \ref{fig1Cregion} (a), we can conclude
that the Gibbs measures  $\mu_1^{*}$ and $\mu_3^{*}$ corresponding
to the stable fixed points $x_1^{*}$ and $x_3^{*}$ are extreme
Gibbs distributions (see for details
\cite{Iof,NHSS,NHSS1,Rozikov}).
\end{rem}
\begin{rem}
We conclude that there are at most 3 translation-invariant Gibbs
measures corresponding to the positive real roots of the equation
\eqref{recurrence16b}. Also, one can show that
translation-invariant Gibbs measures corresponding stable
solutions are extreme.
\end{rem}

\section{Conclusions}\label{Conclusions}
In this paper, by using a new approach to obtain Gibbs measures of
Vannimenus model on a Cayley tree of order three, we have
constructed the recurrence equations corresponding to the model.
The Kolmogorov \emph{consistency} condition has been satisfied. We
have investigated the translation-invariant Gibbs measures
associated to the set $A$ given in \eqref{invariantA}. By means of
such constructions, we have studied the existence of phase
transition for translation-invariant Gibbs measures. The complete
characterization of the extremal measures at any inverse
temperature $\beta=\frac{1}{T}$ remains an important issue.

We stress that the specified model was investigated only
numerically, without rigorous mathematical proofs \cite{NHSS}.
This paper has thus proposed a rigorous measure-theoretical
approach to investigate Gibbs measures with memory of length 2
corresponding to the Ising-Vannimenus model on a Cayley tree of
order three. In this paper, we have also obtained new Gibbs
measures different from the Gibbs measures given in the references
\cite{Akin2016,NHSS}.

Note that in \cite{Akin2016} we established the existence,
uniqueness or non-uniqueness of the translation-invariant Gibbs
measures associated with the Ising-Vannimenus model corresponding
to the same Hamiltonian \eqref{hm} on the Cayley tree of order
two. Hence, results of the present paper totaly differ from
\cite{Akin2016}, and show by increasing the dimension of the tree
we are getting the phase transition for some given parameters $J,
J_p, T$. For example, one can easily examine that although the
phase transition of the same model does not occur for $J = -1.7,
J_p = 6.5, T = 13, k=2$ (see the First Case in \cite{Akin2016}),
there is the phase transition of the Ising-Vannimenus model
corresponding to the Hamiltonian \eqref{hm} for the parameters $J
= -1.7, J_p = 6.5, T = 13, k=3$. Also, depending on the even and
odd of $k$, the recurrence equations obtained in the present paper
totaly differ from \cite{Akin2016}. Exact description of the
solutions of the system of equations
\eqref{recurrence4}-\eqref{recurrence4c} is rather tricky.
Therefore, we have considered only one case \eqref{invariantA},
the other cases remain open problem.

Note that the grid ${{\mathbb{Z}}^{d}}$ is the Cayley tree of the
free abelian group with $d$ generators. $d$-dimensional integer
lattice, denoted ${{\mathbb{Z}}^{d}}$, has so-called amenability
property \cite{Preston}. Moreover, analytical solutions do not
exist on such lattice. But investigations of phase transitions of
spin models on hierarchical lattices show that there are exact
calculations of various physical quantities \cite{Peruggi}. For
many problems the solution on a tree is much simpler than on a
regular lattice such as $d$-dimensional integer lattice and is
equivalent to the standard Bethe-Peierls theory \cite{Katsura}.
The Cayley tree is not a realistic lattice; however, its amazing
topology makes the exact calculations of various quantities
possible. Therefore, the results obtained in our paper can inspire
to study Ising and Potts models over multi-dimensional lattices or
the grid ${{\mathbb{Z}}^{d}}$.

To our knowledge this is the first example of the rigorous study
of Gibbsian phenomena related to the Markov random field on the
Cayley tree of order three by using our approach. The
investigations of the problem for arbitrary order ($k>3$) are very
difficult. Therefore, we will study new results related to the
model for arbitrary order ($k>3$) in a next paper.

We believe the method used here can be applied to any other
lattice model studied in the literature \cite{UA1,UA2}. By
considering the method used in this paper, one can study new Gibbs
measures with memory of length $n>2$ associated with Ising model
on arbitrary odd order Cayley tree and Cayley tree-like lattices
\cite{Moraal}.

\textbf{Acknowledgments}\\
We wish to thank the anonymous referees for useful suggestions
which substantially improved this paper.

\end{document}